\theoremstyle{plain}
\newtheorem{thm}{Theorem}[section]
\newtheorem{lem}[thm]{Lemma}
\newtheorem{conj}[thm]{Conjecture}
\noindent \emph{Proof.} {}{#1}{}}{\hfill
\theoremstyle{plain} 
\newcommand{\thistheoremname}{}
\newtheorem{genericthm}[section]{\thistheoremname}
\theoremstyle{definition}
\def\less{\setminus}
\title{Some remarks on even-hole-free graphs}
\author{Zi-Xia Song\thanks{Supported by  NSF award DMS-1854903. E-mail address: {\tt Zixia.Song@ucf.edu}.}}
  \affil{ 
  { \small {Department  of Mathematics, University of Central Florida, Orlando, FL 32816, USA}}  
     }
\date{}
\begin{document}
\maketitle
\begin{abstract}
A vertex of a graph is {\it bisimplicial} if the set of its neighbors is the union of two cliques; a graph is {\it quasi-line} if   every vertex is bisimplicial.    A recent result of Chudnovsky and Seymour asserts that every non-empty even-hole-free graph has a bisimplicial vertex.   Both  Hadwiger's conjecture and the Erd\H{o}s-Lov\'asz Tihany conjecture have been shown to be true for quasi-line graphs,   but are open  for even-hole-free graphs. In this note, we prove that  for all $k\ge7$, every even-hole-free graph   with no $K_k$ minor is $(2k-5)$-colorable;   every even-hole-free graph $G$ with $\omega(G)<\chi(G)=s+t-1$ satisfies the  Erd\H{o}s-Lov\'asz Tihany conjecture  provided that $ t\ge s>  \chi(G)/3$. Furthermore, we   prove that every $9$-chromatic graph $G$ with $\omega(G)\le 8$ has a $K_4\cup K_6$ minor. Our proofs rely heavily on the structural result of Chudnovsky and Seymour  on even-hole-free graphs. 
\end{abstract}

\baselineskip 16pt

\section{Introduction}
  All graphs in this paper are finite and simple. For a graph $G$, we   use $V(G)$ to denote the vertex set, $E(G)$ the edge set, $|G|$ the number of vertices,   $e(G)$ the number of edges, $\delta(G)$  the minimum degree,  $\Delta(G)$  the maximum degree, $\alpha(G)$ the independence number, $\omega(G)$ the clique number and $\chi(G)$ the chromatic number. A graph $H$ is a minor of a graph $G$ if
$H$ can be obtained from a subgraph of $G$ by contracting edges. We write $G \succcurlyeq H$ if $H$ is
a minor of $G$. In those circumstances we also say that $G$ has an $H$ minor.   Our work is motivated by the  celebrated Hadwiger's conjecture~\cite{Had43}  and the   Erd\H{o}s-Lov\'asz Tihany conjecture~\cite{Erdos68}.

\begin{conj}[Hadwiger's conjecture~\cite{Had43}]\label{HC} For every integer $k \geq 1$, every graph with no $K_{k}$ minor is $(k-1)$-colorable. 
\end{conj}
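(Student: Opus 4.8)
The statement above is Hadwiger's conjecture, which remains open for every $k\ge 7$; what follows is therefore an outline of the only strategy known to succeed for small $k$, together with the point at which it currently breaks down. The natural set-up is a minimal-counterexample argument: fix $k$ and let $G$ be a vertex-minimal graph with $\chi(G)\ge k$ and no $K_k$ minor. First I would record the standard reductions forced by minimality. Deleting a vertex of degree at most $k-2$ and extending a coloring shows $\delta(G)\ge k-1$; a short argument rules out clique cutsets; and with more effort one shows $G$ is $(k-1)$-connected and admits no separation of small order whose two sides can be recolored independently and then reconciled. The point of this phase is to reduce to graphs of large connectivity, since that is essentially the only regime in which the extremal $K_k$-minor-free graphs are understood.

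Second, I would feed the resulting highly connected graph into a structure theorem for $K_k$-minor-free graphs and derive a contradiction with $\chi(G)\ge k$. This is exactly how the known cases go. For $k=5$, Wagner's description of edge-maximal $K_5$-minor-free graphs as clique-sums of order at most $3$ of planar triangulations and the Wagner graph, combined with the Four Color Theorem and the fact that clique-sums over cliques of size at most $3$ preserve $4$-colorability, gives the result. For $k=6$, the theorem of Robertson, Seymour and Thomas shows that a minimal counterexample would be an apex graph over a planar graph, again reducible to the Four Color Theorem. For general $k$ the plan would be to invoke the Graph Minor Structure Theorem---$K_k$-minor-free graphs are clique-sums of graphs that embed in a surface of bounded genus after deleting a bounded number of apex vertices and attaching a bounded number of bounded-width vortices---and then to bound the chromatic number of each such piece, and of clique-sums of such pieces, by $k-1$.

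The main obstacle, and the reason the conjecture has resisted proof for $k\ge 7$, is precisely the coloring step for the structural pieces. The Graph Minor Structure Theorem guarantees only that the number of apices, the genus, the number of vortices and their widths are bounded in terms of $k$, but the bounds it yields are enormous compared with $k$, so one extracts only a coloring bound that is linear in $k$ with a large multiplicative constant (the current record for arbitrary $K_k$-minor-free graphs, from recent work of Delcourt and Postle building on Norin, Postle and Song, is $\chi(G)=O(k\log\log k)$). Bringing this down to the conjectured $k-1$ would require either a vastly sharper structural description specific to the $K_k$-minor-free case or a genuinely new, non-structural method, and neither is in sight. This is exactly why the present paper does not attack the conjecture head-on, but instead restricts to even-hole-free graphs, where the Chudnovsky--Seymour bisimplicial-vertex theorem supplies the kind of local structural leverage that the general case lacks.
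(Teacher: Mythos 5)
You have correctly identified that the statement is Hadwiger's conjecture itself, which is open for all $k\ge 7$; the paper states it only as a conjecture and offers no proof, so there is nothing to compare your attempt against. Your survey of the known cases ($k\le 6$ via Wagner and Robertson--Seymour--Thomas plus the Four Color Theorem) and of why the minimal-counterexample-plus-structure-theorem strategy stalls at the coloring step is accurate, and you rightly do not claim to have a proof.
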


\cref{HC} is  trivially true for $k\le3$, and reasonably easy for $k=4$, as shown independently by Hadwiger~\cite{Had43} and Dirac~\cite{Dirac52}. However, for $k\ge5$, Hadwiger's conjecture implies the Four Color Theorem~\cite{AH77,AHK77}.   Wagner~\cite{Wagner37} proved that the case $k=5$ of Hadwiger's conjecture is, in fact, equivalent to the Four Color Theorem, and the same was shown for $k=6$ by Robertson, Seymour and  Thomas~\cite{RST93}. Despite receiving considerable attention over the years, Hadwiger's conjecture remains wide open for $k\ge 7$ and is   widely considered among the most important problems in graph theory and has motivated numerous developments in graph coloring and graph minor theory. The best known upper bound on the chromatic number of graphs with no $K_k$ minor  is $O(k(\log \log k)^6)$ due to
Postle~\cite{Postle20}, improving a recent breakthrough of Norin, Postle, and the present author~\cite{NPS20} who improved a long-standing bound  obtained independently by Kostochka~\cite{Kostochka82,Kostochka84} and Thomason~\cite{Thomason84}.  We refer the reader to a recent survey by Seymour~\cite{Sey16} for further  background on \cref{HC}.\medskip

Throughout the paper, let  $s $ and $t $ be positive integers. A graph $G$ is \emph{$(s,t)$-splittable} if $V(G)$ can be partitioned into two sets $S$ and $T$ such that $\chi(G[S ]) \ge s$ and $\chi(G[T ]) \ge t$.  In 1968,   Erd\H{o}s~\cite{Erdos68}  published the following conjecture of Lov\'asz, which has since been known as the   Erd\H{o}s-Lov\'asz Tihany conjecture.

\begin{conj}[The Erd\H{o}s-Lov\'asz Tihany conjecture]\label{c:ELTC}
Let $G$ be a   graph with $\omega(G)<\chi(G)=s+t-1$, where   $t\ge s\ge2$  are integers.  Then $G$ is $(s,t)$-splittable.  
\end{conj}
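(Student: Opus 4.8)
The plan is a minimal-counterexample argument combined with the classical ``optimal-partition'' (uncrossing) technique from the theory of colour-critical graphs. Suppose the conjecture fails, and let $G$ be a counterexample with $|V(G)|$ as small as possible; set $k:=s+t-1$, so $\omega(G)<\chi(G)=k$ and $G$ has no partition $V(G)=S\cup T$ with $\chi(G[S])\ge s$ and $\chi(G[T])\ge t$. First I would show that $G$ is $k$-vertex-critical: if $\chi(G-v)=k$ for some $v$, then $G-v$ has fewer vertices and $\omega(G-v)\le\omega(G)<k$, so by minimality $V(G)\setminus\{v\}=S\cup T$ with $\chi(G[S])\ge s$ and $\chi(G[T])\ge t$; adding $v$ to either side preserves both inequalities, so $G$ is $(s,t)$-splittable, a contradiction. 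Hence $\delta(G)\ge k-1=s+t-2$; moreover $G$ is $2$-connected and has no clique cutset (Dirac), and by Gallai's theorem the subgraph $L$ induced on the vertices of degree exactly $s+t-2$ (the \emph{low} vertices) is a Gallai forest, i.e.\ every block of $L$ is a complete graph or an odd cycle.

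Next I would run the uncrossing argument. Among all partitions $(A,B)$ of $V(G)$ with $\chi(G[A])\ge s$, choose one maximizing $\chi(G[B])$ and, subject to that, minimizing $|A|$; then $\chi(G[A])=s$, $G[A]$ is vertex-$s$-critical (so every vertex of $A$ has at least $s-1$ neighbours inside $A$), and we wish to derive a contradiction from the assumption $\chi(G[B])\le t-1$. For a vertex $v\in A$ the maximality of $\chi(G[B])$ constrains how $N(v)$ meets the colour classes of an optimal colouring of $G[B]$, and combined with $\delta(G)\ge s+t-2$ this makes the local picture at $v$ almost rigid. The aim is to show, by Kempe-chain exchanges between the colour classes of an $(s+t-2)$-colouring of $G-v$ and the class structure of $G[A]$ and $G[B]$, and by repeatedly exploiting the Gallai-forest description of $L$, that these rigidities propagate and force ever larger cliques, ending in $K_{s+t-1}\subseteq G$ and contradicting $\omega(G)<\chi(G)$. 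For $s=2$ this reduces exactly to the statement that the only double-critical $k$-chromatic graph is $K_k$, and the Mozhan--Stiebitz Kempe-chain machinery is the natural engine in general; when $G$ is even-hole-free one additionally has the Chudnovsky--Seymour theorem (mentioned above) guaranteeing a bisimplicial vertex, which supplies extra control on $L$.

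The step I expect to be the genuine obstacle is precisely this last propagation: in the \emph{tight} regime $\delta(G)=s+t-2$ with $\omega(G)$ far below $\chi(G)$, the degree count only \emph{just} fails and the invariant $\chi(G[A])+\chi(G[B])$, even with Kempe chains, is not by itself strong enough to manufacture a $K_{s+t-1}$ --- which is exactly why the conjecture remains open. The realistic way to push the argument through, and the one behind the special cases known in the literature, is to either strengthen the invariant (a weighted potential that also counts critical edges, or Mozhan-type partitions into more than two parts) or to add structure that makes colour-critical subgraphs transparent: small values of $s$ (so that ``$\chi(G[A])\ge s$'' becomes an edge- or triangle-type condition), hereditary classes such as line graphs of multigraphs, quasi-line graphs, or graphs with $\alpha(G)\le 2$, the even-hole-free class via the bisimplicial-vertex theorem, or a chromatic slack $\chi(G)\gg s+t$. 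For the fully general statement, closing this constant-sized gap in the degree count is the hard part, and I would not expect the plan above to terminate without one of these extra ingredients.
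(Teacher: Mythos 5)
The statement you were asked about is \cref{c:ELTC} itself, which is an open conjecture; the paper does not prove it, and neither does your proposal. Your own second and third paragraphs concede the decisive point: after setting up the minimal counterexample, the degree bound $\delta(G)\ge s+t-2$, Gallai's structure of the low-vertex subgraph, and the uncrossed partition $(A,B)$ with $\chi(G[A])=s$ and $G[A]$ vertex-$s$-critical, you never carry out the ``propagation'' step that is supposed to manufacture a $K_{s+t-1}$ and contradict $\omega(G)<\chi(G)$. That step is not a routine verification left to the reader --- it is precisely where every known attack stalls, and the Mozhan--Stiebitz Kempe-chain machinery you invoke is only known to close the gap for small $s$ (the cases $(2,2)$ through $(3,5)$ cited in the introduction). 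So what you have is a correct and reasonable framing of the problem together with an accurate diagnosis of why it is hard, but not a proof; as written, the argument has a genuine gap exactly where you say it does.

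For comparison, the paper only establishes special cases, and by a different and much more concrete route that avoids Kempe chains and uncrossing altogether. In \cref{t:ehfree} it assumes $G$ is even-hole-free and $s>\chi(G)/3$: the Chudnovsky--Seymour theorem (\cref{t:evenholefree}) supplies a bisimplicial vertex $v$, so $\alpha(G[N(v)])\le 2$ and $d(v)\le 2\omega(G[N(v)])$, while Stiebitz's \cref{t:comnbr} forces $\omega(G)\le t-1$ in a minimal counterexample; a counting argument on the colour classes of a proper colouring of $G[N(v)]$ then feeds into \cref{l:main}, which converts ``$v$ has few neighbours outside a small union of colour classes'' directly into an $(r,\chi(G)+1-r)$-split. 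The hypothesis $s>\chi(G)/3$ is exactly what makes the final count close --- which mirrors, in this restricted setting, the same tightness you identified as the obstruction in general. If you want to salvage something from your plan, the realistic target is such a conditional result (extra structural hypotheses on $G$, or restrictions on $s,t$), not the full conjecture.
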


 \cref{c:ELTC} is  hard, and few related results are known. 
The case $(2, 2)$ for  \cref{c:ELTC}  is trivial; the cases $(2, 3)$ and   $(3, 3)$  were shown by Brown and Jung~\cite{BJ69} in 1969;
Mozhan~\cite{Moz87} and Stiebitz~\cite{Sti87a} each independently showed the case $(2, 4)$ in 1987; 
the cases   $(3, 4)$ and  $(3, 5)$ were  settled by Stiebitz~\cite{Sti87b}  in 1988.  A relaxed version of \cref{c:ELTC} was proved in~\cite{Sti17}.\medskip

Recent work on both \cref{HC} and \cref{c:ELTC}  have also focused on proving the conjectures for certain classes of graphs. 
A vertex of a graph is  \emph{bisimplicial} if the set of its neighbors is the union of two cliques; a graph is \emph{quasi-line} if   every vertex is bisimplicial. Note that every line graph is  quasi-line and every quasi-line graph is claw-free~\cite{ChSe2012}.  A \emph{hole} in a graph   is an induced cycle of length at least four; a hole is \emph{even} if it has an even length. A graph is \emph{even-hole-free} if it contains no even hole. 
Hadwiger's conjecture has been shown to be true for     line graphs by Reed and Seymour~\cite{RS04}; quasi-line graphs by Chudnovsky and Ovetsky Fradkin~\cite{CF2008};   graphs  $G$ with $\alpha(G)\ge3$ and    no hole of length between $4$ and $2\alpha(G)-1$ by Thomas and the present author~\cite{ThomasSong}. 
Meanwhile,     the Erd\H{o}s-Lov\'asz Tihany conjecture   has also been verified to be true   for line graphs by Kostochka and Stiebitz~\cite{KS08}; 
quasi-line graphs,  and   graphs $G$ with $\alpha(G) = 2$ by Balogh, Kostochka, Prince and Stiebitz~\cite{BKPS09}; graphs  $G$ with $\alpha(G)\ge3$ and    no hole of length between $4$ and $2\alpha(G)-1$ by   the present author~\cite{Song19}.    \medskip

  Chudnovsky and Seymour~\cite{CS20}  recently  proved  a structural result on even-hole-free graphs. 

\begin{thm}[Chudnovsky and Seymour~\cite{CS20}]\label{t:evenholefree}  Let $G$ be a    non-empty even-hole-free graph. Then 
  $G$ has a bisimplicial vertex and
  $\chi(G)\le 2\omega(G)-1$. 
\end{thm}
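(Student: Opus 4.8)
The plan is to prove the two conclusions in turn: the colouring bound $\chi(G)\le 2\omega(G)-1$ will be a short consequence of the first conclusion, and the existence of a bisimplicial vertex is the substantive claim.

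For the reduction, I would argue by induction on $|V(G)|$. Let $G$ be non-empty and even-hole-free and let $v$ be a bisimplicial vertex, so $N(v)=A\cup B$ with $A,B$ cliques. Since $\{v\}\cup A$ and $\{v\}\cup B$ are cliques, $|A|,|B|\le\omega(G)-1$ and hence $\deg_G(v)\le 2\omega(G)-2$. The graph $G-v$ is an induced subgraph of $G$, so even-hole-free, with $\omega(G-v)\le\omega(G)$; by induction it has a proper $(2\omega(G)-1)$-colouring, and since $v$ has at most $2\omega(G)-2$ neighbours a colour is free for $v$. So it remains to show that every non-empty even-hole-free graph has a bisimplicial vertex.

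The first step toward that is to reduce to graphs with no clique cutset. If $C$ is a clique cutset and $G=G_1\cup G_2$ with $G_1,G_2$ induced, $G_1\cap G_2=G[C]$, and no edges between $V(G_1)\setminus C$ and $V(G_2)\setminus C$, then each $G_i$ is even-hole-free and any bisimplicial vertex of $G_i$ outside $C$ is still bisimplicial in $G$, since its neighbourhood does not change. Iterating the split down to the pieces with no clique cutset and carrying the right inductive invariant along — the standard one being that each such piece is either a clique or contains two non-adjacent bisimplicial vertices, which survives a split because a clique cutset can hold at most one member of any stable pair — it suffices to prove: \emph{every even-hole-free graph with no clique cutset is a clique or has two non-adjacent bisimplicial vertices.}

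This last statement is where essentially all the difficulty lies, and I do not see a route to it that avoids a long structural case analysis. The natural attack is by contradiction: assume $G$ is even-hole-free, has no clique cutset, is not a clique, and has no bisimplicial vertex, and try to produce an even hole. A vertex $v$ fails to be bisimplicial exactly when the complement of $G[N(v)]$ is not bipartite, i.e. contains an odd cycle; the shortest such cycle is induced, so equivalently $G[N(v)]$ contains an induced copy of $\overline{C_\ell}$ for some odd $\ell\ge3$ (the case $\ell=3$ being that $v$ is the centre of a claw, and $\ell=5$ that $G[N(v)]$ contains an induced $C_5$). One then has to take such a local obstruction in some $N(v)$ and analyse, configuration by configuration, how it extends through the rest of $G$: shortest paths joining disjoint parts of the obstruction, together with the obstruction itself, repeatedly close up into an even hole unless a clique cutset appears, contradicting the hypotheses. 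Organising this so that every way the local obstruction can propagate is accounted for is exactly the content of the Chudnovsky--Seymour argument for \cref{t:evenholefree}, and — through the Conforti--Cornu\'{e}jols--Kapoor--Vu\v{s}kovi\'{c} decomposition theorem for even-hole-free graphs — of the earlier proof by Addario-Berry, Chudnovsky, Havet, Reed and Seymour; it is a genuinely hard structural theorem rather than a short exercise.
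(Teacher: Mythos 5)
Your reduction of the second conclusion to the first is correct: since every induced subgraph of $G$ is again even-hole-free, each such subgraph has a bisimplicial vertex, which (as you note) has degree at most $2\omega(G)-2$; so $G$ is $(2\omega(G)-2)$-degenerate and the greedy/inductive colouring goes through. Your clique-cutset reduction, with the strengthened invariant that every even-hole-free graph is a clique or has two non-adjacent bisimplicial vertices, is also the right framework and matches how the published arguments are organised. However, as a proof of the theorem your attempt has a genuine and acknowledged gap: the entire substance of the result is the existence of a bisimplicial vertex (equivalently, your final italicised claim about graphs with no clique cutset), and you do not prove it --- you describe the shape of an argument (locate an induced $\overline{C_\ell}$, $\ell$ odd, in some neighbourhood and propagate it to an even hole or a clique cutset) and then defer to Chudnovsky--Seymour. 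A description of a strategy plus a citation is not a proof, so the attempt is incomplete.

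That said, you should know that the paper you are working against does exactly the same thing: \cref{t:evenholefree} is stated there as an imported result of Chudnovsky and Seymour~\cite{CS20}, with no proof given. So there is no in-paper argument to compare yours to; the honest assessment is that your write-up correctly supplies the easy half (the degeneracy bound) and the correct reduction scaffolding for the hard half, and is entirely right that the remaining core is a long structural case analysis that cannot be reproduced in a few lines. If the intent is to use this theorem as a black box, as the paper does, your text is a reasonable gloss; if the intent is a self-contained proof, the key lemma is missing.
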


It is unknown whether   \cref{HC} and \cref{c:ELTC} hold for even-hole-free graphs.  Using   \cref{t:evenholefree},  we prove in Section~\ref{s:Main} that  for all $k\ge7$, every even-hole-free graph   with no $K_k$ minor is $(2k-6)$-colorable;   every even-hole-free graph $G$ with $\omega(G)<\chi(G)=s+t-1$  satisfies \cref{c:ELTC}  provided that $t\ge s>  \chi(G)/3$.  It is worth noting that Kawarabayashi, Pedersen and Toft~\cite{KPT2011} observed that  if Hadwiger's conjecture holds, then the following conjecture   might be easier to settle than the   Erd\H{o}s-Lov\'asz Tihany conjecture. 

\begin{conj}[Kawarabayashi, Pedersen, Toft~\cite{KPT2011}]\label{c:eltcminor}
 Every graph $G$ satisfying $\omega(G)<\chi(G)=s+t-1$    has two vertex-disjoint subgraphs $G_1$ and $G_2$ such that $G_1\succcurlyeq K_s$ and $G_2 \succcurlyeq K_t$, where $  t\ge s\ge 2$ are integers. 
 \end{conj}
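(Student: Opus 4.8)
The plan is to argue by minimal counterexample. Suppose \cref{c:eltcminor} fails and choose a counterexample $G$ with $(|G|,e(G))$ lexicographically minimum, so that $\chi(G)=s+t-1$, $\omega(G)\le s+t-2$, $t\ge s\ge 2$, and $G$ has no two vertex-disjoint subgraphs $G_1\succcurlyeq K_s$ and $G_2\succcurlyeq K_t$. First I would record the standard reductions: deleting a vertex or an edge, passing to a component, or splitting across a clique cutset only makes the hypothesis easier to meet on a smaller or sparser graph, so $G$ is $(s+t-1)$-critical; hence $\delta(G)\ge s+t-2$, $G$ is $2$-connected with no clique cutset, and by Kostochka--Yancey $e(G)\ge\frac{(s+t)(s+t-3)}{2(s+t-2)}\,|G|-O(1)$. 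Contracting an edge $uv$ gives $\chi(G/uv)\ge\chi(G)-1$, and if $\chi(G/uv)\ge s+t-1$ then an $(s+t-1)$-chromatic induced subgraph $H$ of $G/uv$ has either $\omega(H)\le s+t-2$, making $H$ a smaller counterexample, or $\omega(H)=s+t-1$, giving $G\succcurlyeq K_{s+t-1}$. So $G$ obeys the dichotomy: \emph{either $G$ is $(s+t-1)$-contraction-critical, or $G\succcurlyeq K_{s+t-1}$.}

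In the branch $G\succcurlyeq K_{s+t-1}$, fix a model $\{B_1,\dots,B_{s+t-1}\}$ of $K_{s+t-1}$ with $\sum_i|B_i|$ minimum; since $\omega(G)<s+t-1$ forces $|G|>s+t-1$, the remainder $R:=V(G)\setminus\bigcup_i B_i$ is nonempty. Putting $B_1,\dots,B_s$ on the $K_s$-side and $B_{s+1},\dots,B_{s+t-1}$ on the $K_t$-side, the only missing piece is one further branch set joined to all of $B_{s+1},\dots,B_{s+t-1}$; the goal is to construct it inside $R$, or by rerouting the $t-1$ trees through $R$, using $2$-connectivity and minimality of the model to control where the branch trees reach. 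In the contraction-critical branch I would instead press on density: the Kostochka--Yancey bound, strengthened by the extra structure known for contraction-critical graphs, should let me pull out a vertex set $X$ carrying a $K_t$ minor, after which it remains to prove $\chi(G-X)\ge s$, whereupon $G-X\succcurlyeq K_s$ by Hadwiger's conjecture in its known range, or unconditionally by the clique-minor density theorems of Kostochka and Thomason (average degree $\Omega(r\sqrt{\log r})$ forces a $K_r$ minor) and Postle. As the first concrete instance, for $(s,t)=(4,6)$ a $9$-critical graph has $e(G)\ge\frac{35}{8}|G|-O(1)>4|G|-10$ for $|G|$ large, hence a $K_6$ minor by Mader's theorem; a case analysis on the location of that minor, using $\delta(G)\ge 8$ and, when $G$ is even-hole-free, the bisimplicial vertex supplied by \cref{t:evenholefree}, should yield a disjoint $K_4$ minor (here $\chi(G-X)\ge 4$ already forces $K_4$ by Dirac), recovering the $9$-chromatic, $\omega\le 8$ statement in the abstract.

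The hard part, and the reason \cref{c:eltcminor} is open as soon as $\min(s,t)\ge 4$, is precisely the step from one global invariant to two vertex-disjoint dense substructures: deleting the vertex set of a $K_t$ minor can drop the chromatic number by far more than $t$, so $\chi(G-X)\ge s$ is not automatic, and bounding $\chi(G-X)$ from below seems to need exactly the information Hadwiger's conjecture encodes. Indeed, the one-minor statement ``$\chi(G)\ge s+t-1$ forces a $K_{s+t-1}$ minor'' is Hadwiger's conjecture itself, whereas \cref{c:eltcminor} asks for two disjoint minors of total order $\chi(G)+1$ and is formally stronger; an unconditional proof in full generality would presumably require a genuinely new splitting mechanism for clique minors rather than a refinement of existing density or colouring arguments. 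Accordingly, the realistic deliverables of this plan are the conditional reduction to Hadwiger-type hypotheses, unconditional proofs in the regime $\chi(G)=\Omega\!\big(\min(s,t)\sqrt{\log\min(s,t)}\big)$ where clique-minor density alone suffices, and the small cases $s\le 3$ and $(s,t)=(4,6)$ dispatched by hand as above.
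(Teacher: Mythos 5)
The statement you were asked to prove is a conjecture of Kawarabayashi, Pedersen and Toft; the paper does not prove it and it remains open. What the paper actually establishes is the single case $(s,t)=(4,6)$, namely \cref{t:case46}, and your submission is likewise not a proof but a programme: you yourself concede that the general statement ``is open as soon as $\min(s,t)\ge 4$'' and list only conditional reductions, an asymptotic regime, and small cases as ``realistic deliverables.'' So there is a genuine gap by construction -- nothing in the proposal closes the step from one global invariant ($\chi(G)=s+t-1$) to two vertex-disjoint clique minors, and the branch of your dichotomy in which $G$ is $(s+t-1)$-contraction-critical is left entirely to density heuristics (``should let me pull out a vertex set $X$ carrying a $K_t$ minor, after which it remains to prove $\chi(G-X)\ge s$''), which is exactly the unproved heart of the matter.

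Even your concrete $(4,6)$ sketch is incomplete at the same point. You extract a $K_6$ minor on a vertex set $X$ from Kostochka--Yancey density via \cref{t:mader}, and then assert that a case analysis ``should yield'' a disjoint $K_4$ minor because $\chi(G-X)\ge 4$ would force one; but $\chi(G-X)\ge 4$ is precisely what is not automatic -- deleting the branch sets of a $K_6$ minor can lower the chromatic number by far more than $6$ -- and you name this obstacle without overcoming it. The paper's proof of \cref{t:case46} is organised to avoid it altogether: a minimum counterexample is shown to be a $(4,6)$-graph (using that Hadwiger's conjecture holds up to $k=6$), so $\omega(G)\le 5$ by \cref{t:comnbr}; Claim~1 gives $\alpha(G[N(x)])\le d(x)-7$ by contracting an independent set in a neighbourhood together with its centre; the range $\delta(G)\ge 13$ is handled by \cref{t:mindegree} plus \cref{t:mader}; Claim~2 shows every neighbourhood is even-hole-free, so \cref{t:evenholefree} bounds $\chi(G[N(v)])$; the degrees $11$ and $12$ are eliminated by an edge count in $G\less\{v,v_1,v_2,v_3\}$ plus Mader; and the remaining degrees $9\le d(v)\le 10$ are finished by the colour-class splitting \cref{l:main} with $r=4$ or $r=6$, which produces a $(4,6)$-splitting directly rather than a minor whose complement must stay $4$-chromatic. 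In short: for the conjecture as stated your proposal is not a proof, and for the one case the paper does prove, your route stalls at the $\chi(G-X)\ge s$ step that the paper's argument is specifically designed to sidestep.
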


In the same paper \cite{KPT2011}, they settled   \cref{c:eltcminor} for   a few   additional values of $(s, t) \in \{ (2,6), (3, 6),   (4, 4),   (4, 5) \}$.  
  We end  Section~\ref{s:Main} by proving the $(4,6)$ case for  \cref{c:eltcminor}, that is,  
 we   prove that every   graph $G$ with $\chi(G)=9>\omega(G) $ has a $K_4\cup K_6$ minor.  Here $K_4\cup K_6$ denotes the disjoint union of  $K_4$ and $K_6$. \medskip

We need to introduce more notation.  Let $G$ be a graph. For a vertex $x\in V(G)$, we will use $N(x)$ to denote the set of vertices in $G$ which are adjacent to $x$.
We define $N[x] = N(x) \cup \{x\}$ and $d(x) = |N(x)|$.
If  $A, B\subseteq V(G)$ are disjoint, we say that $A$ is \emph{complete} to $B$ if each vertex in $A$ is adjacent to all vertices in $B$, and $A$ is \emph{anti-complete} to $B$ if no vertex in $A$ is adjacent to any vertex in $B$.
If $A=\{a\}$, we simply say $a$ is complete to $B$ or $a$ is anti-complete to $B$.  
The subgraph of $G$ induced by $A$, denoted $G[A]$, is the graph with vertex set $A$ and edge set $\{xy \in E(G) : x, y \in A\}$. We denote by $B \less A$ the set $B - A$,   and $G \less A$ the subgraph of $G$ induced on $V(G) \less A$, respectively.
If $A = \{a\}$, we simply write $B \less a$    and $G \less a$, respectively.  
 We say that  $G$ is \emph{$k$-chromatic} if $\chi(G)=k$.    An  \emph{$(s, t)$-graph}   is a connected $(s+t-1)$-chromatic graph which does not contain two vertex-disjoint subgraphs with chromatic number $s$ and $t$, respectively.   We use the convention   ``A :="  to mean that $A$ is defined to be
the right-hand side of the relation.\medskip

Finally, we shall make use of the following   results of Stiebitz~\cite{Sti87b, Sti96} and Mader~\cite{Mader68}.

  \begin{thm}[Stiebitz~\cite{Sti87b}]\label{t:comnbr} Suppose $G$ is  an $(s, t)$-graph with $t\ge s\ge2$. If   $\omega(G)\ge t$, then $\omega(G)\ge s+t-1$. 
 \end{thm}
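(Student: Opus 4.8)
The plan is to prove the equivalent statement that every $(s,t)$-graph $G$ with $\omega(G)\ge t$ contains $K_{s+t-1}$, by induction on $s$. The workhorse is a \emph{deletion lemma}: if $G$ is not $(s,t)$-splittable and $Q\subseteq V(G)$ induces a clique with $|Q|\ge t$, then $\chi(G-Q)\le s-1$. Indeed $\chi(G[Q])=|Q|\ge t\ge s$, so the bipartition $(Q,\,V(G)\setminus Q)$ would witness $(s,t)$-splittability unless $\chi(G-Q)<s$. (By the same token $|Q|\ge s$ forces $\chi(G-Q)\le t-1$.) Throughout, if $\omega(G)\ge s+t-1$ there is nothing to prove, so we work toward a contradiction assuming $t\le\omega(G)\le s+t-2$.

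For the base case $s=2$, let $K$ be a maximum clique; since $\omega(G)\ge t$ we have $|K|\ge t$, and if $|K|\ge t+1=s+t-1$ we are done, so suppose $|K|=t$. The deletion lemma with $Q=K$ gives $\chi(G-K)\le 1$, so $R:=V(G)\setminus K$ is independent and every vertex of $R$ has all of its neighbors in $K$. Since $R$ is independent, a coloring of $K$ by $\{1,\dots,t\}$ extends to a $t$-coloring of $G$ unless some $w\in R$ is complete to $K$; but $\chi(G)=t+1$ forbids a $t$-coloring, so such a $w$ exists, giving a $K_{t+1}$ and contradicting $|K|=t$. Hence $|K|\ge t+1$.

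For the inductive step $s\ge 3$, take $G$ to be an $(s,t)$-graph with $t\le\omega:=\omega(G)\le s+t-2$ minimizing $|V(G)|$, let $K$ be a maximum clique, and set $R:=G-K$, so $\chi(R)\le s-1$ by the deletion lemma. Fix an $(s-1)$-coloring $\phi$ of $R$ and form the bipartite \emph{compatibility graph} $H$ between $V(K)$ and the $s-1$ colors, joining $v\in K$ to a color $c$ exactly when $v$ has no $\phi$-neighbor of color $c$. If $H$ has a matching of size at least $\omega-t+1$, then recoloring the matched vertices of $K$ into the palette of $R$ and giving the remaining $\omega-(\omega-t+1)=t-1$ vertices of $K$ fresh colors exhibits a proper coloring of $G$ using at most $(s-1)+(t-1)=s+t-2$ colors, a contradiction. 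So for every $(s-1)$-coloring $\phi$ the matching number of $H$ is at most $\omega-t$, and K\"onig's theorem yields a subclique $K^{*}\subseteq K$ with $|K^{*}|\ge t$ and a color set $D$ with $|K^{*}|+|D|\ge s+t-1$ such that every vertex of $K^{*}$ has a $\phi$-neighbor of every color in $D$. One must now show this configuration is impossible.

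The main obstacle is precisely this K\"onig-obstruction case: a subclique $K^{*}\subseteq K$, each vertex of which blocks every optimal coloring of $G-K$ on a fixed color set. The natural strategy is to extract a smaller $(s-1,t)$-graph — say from $G^{*}:=G[K^{*}\cup\bigcup_{c\in D}\phi^{-1}(c)]$, or from $G$ minus one vertex of $K^{*}$ — and invoke the induction hypothesis; but deleting a vertex drops the clique number, so arranging that the extracted graph still has clique number at least $t$, is connected, and is non-$(s-1,t)$-splittable with the correct chromatic number is where the real work lies. I expect one needs the full strength of vertex-criticality of the minimal counterexample (so that its minimum-degree vertices induce a Gallai forest) together with a Kempe-chain / frozen-coloring analysis rather than a bare counting estimate; this is also the point at which the hypothesis $t\ge s$ is used, since the $t-1$ leftover clique vertices must play the role of the "small side" of the splitting. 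By contrast the deletion lemma and the base case $s=2$ are entirely routine, so all the difficulty is concentrated in closing the induction. (Alternatively, one could try to bypass the induction and argue directly on a vertex-critical counterexample, iterating the deletion lemma and exploiting the Gallai structure of its low vertices.)
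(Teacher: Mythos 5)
The paper does not prove \cref{t:comnbr} at all --- it is quoted from Stiebitz~\cite{Sti87b} as a known result --- so there is no in-paper proof to compare against; your attempt has to stand on its own, and it does not. The pieces you do supply are correct: the deletion lemma (a clique $Q$ with $|Q|\ge t$ in a non-splittable graph forces $\chi(G-Q)\le s-1$), the base case $s=2$, and the K\"onig set-up showing that for every $(s-1)$-colouring $\phi$ of $G-K$ there exist $K^{*}\subseteq K$ and a colour set $D$ with $|K^{*}|\ge t$, $|K^{*}|+|D|\ge s+t-1$, and every vertex of $K^{*}$ having a $\phi$-neighbour in every class of $D$. But at exactly that point you stop. ``One must now show this configuration is impossible'' is the entire content of the theorem for $s\ge3$; what follows is a list of techniques you \emph{expect} to need (Kempe chains, Gallai trees, vertex-criticality) rather than an argument. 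That is the hardest step of the proof, and it is missing.

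There is also a structural worry about the induction scheme itself. Even if you could extract from the obstruction an $(s-1,t)$-graph $G'$ with $\omega(G')\ge t$, the induction hypothesis would only yield $\omega(G')\ge (s-1)+t-1=s+t-2$, hence $\omega(G)\ge s+t-2$ --- which is precisely the range you are assuming for contradiction, not the desired $s+t-1$. So closing the induction requires not just finding a smaller $(s-1,t)$-graph but also producing an extra vertex complete to the clique it provides, and nothing in the proposal indicates how. As written, this is a proof sketch with a correctly prepared launch pad and no landing.
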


\begin{thm}[Stiebitz~\cite{Sti96}]\label{t:mindegree}  Every graph $G$ satisfying $\delta(G)\ge s+t+1$  has two vertex-disjoint subgraphs $G_1$ and $G_2$ such that $\delta(G_1)\ge s \text{ and } \delta(G_2)\ge t$.
 \end{thm}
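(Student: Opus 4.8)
The plan is to prove the slightly more flexible statement that $V(G)$ can be partitioned into two \emph{nonempty} sets $A$ and $B$ with $\delta(G[A])\ge s$ and $\delta(G[B])\ge t$; then $G_1:=G[A]$ and $G_2:=G[B]$ are as required. I will assume $s\le t$ (the statement is symmetric in $s,t$), write $e_G(A,B)$ for the number of edges of $G$ with one end in $A$ and the other in $B$, and use that $|V(G)|\ge s+t+2$.

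The first step is a local ``switching'' argument. Among all partitions $(A,B)$ of $V(G)$, choose one minimizing $\Phi(A,B):=e_G(A,B)+s|A|+t|B|$. If some $v\in A$ had $d_{G[A]}(v)\le s-1$, then $d_{G[B]}(v)=d_G(v)-d_{G[A]}(v)\ge(s+t+1)-(s-1)=t+2$, so moving $v$ from $A$ to $B$ would change $\Phi$ by $d_{G[A]}(v)-d_{G[B]}(v)-s+t\le(s-1)-(t+2)-s+t=-3<0$, contradicting minimality; symmetrically, no $v\in B$ has $d_{G[B]}(v)\le t-1$. Hence the minimizer satisfies $\delta(G[A])\ge s$ and $\delta(G[B])\ge t$, both conditions being vacuous on any empty part. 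If both parts are nonempty we are done, so the remaining case is that one part, say $A$, is empty; then $B=V(G)$ and we must build $G_1$ and $G_2$ directly inside $G$, which still only satisfies $\delta(G)\ge s+t+1$.

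Treating this ``inseparable'' case is the heart of the matter; the extremal partition above genuinely can degenerate (for instance when $G$ is complete the minimizer is $(V(G),\es)$ up to swapping), so a separate argument is needed here. If $\omega(G)\ge s+1$, take an induced $K_{s+1}=:G_1$; then $G-V(G_1)$ must contain a subgraph $G_2$ with $\delta(G_2)\ge t$, for otherwise $G-V(G_1)$ is $(t-1)$-degenerate and hence has a vertex $u$ with at most $t-1$ neighbours in $V(G)\setminus V(G_1)$, giving $d_G(u)\le(t-1)+|V(G_1)|=s+t<\delta(G)$, a contradiction. If instead $\omega(G)\le s$, no clique can be peeled off, and I would take a vertex-minimal subgraph $H$ with $\delta(H)\ge s$ (so $\delta(H)=s$ and every vertex of $H$ has a neighbour in $H$ of degree exactly $s$) and argue that $G-V(H)$ cannot be $(t-1)$-degenerate: a first vertex $u$ of a putative degeneracy ordering would have at least $(s+t+1)-(t-1)=s+2$ neighbours in $V(H)$, and this should clash with either $\omega(G)\le s$ (a clique among those neighbours, together with $u$, is too large) or the minimality of $H$ (an absorbable $u$ lets one decrease a suitable auxiliary parameter — the number of vertices of $H$, or, failing that, the number of edges leaving $H$ — while keeping minimum degree $\ge s$). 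I expect reconciling the minimality/extremality of $H$ with the degeneracy of $G-V(H)$ in this clique-free regime to be the main obstacle, and it is exactly where the hypothesis $\delta(G)\ge s+t+1$ must be used in full; the bound is sharp, since $K_{s+t+1}$ admits no such pair $G_1,G_2$.
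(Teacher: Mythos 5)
This statement (\cref{t:mindegree}) is quoted in the paper from Stiebitz~\cite{Sti96} without proof, so there is no in-paper argument to compare against; your attempt has to stand on its own. The first half of it is sound: minimizing $\Phi(A,B)=e_G(A,B)+s|A|+t|B|$ and checking that moving a low-degree vertex strictly decreases $\Phi$ is a correct computation, and it shows that any minimizer with both parts nonempty yields the desired $G_1,G_2$. You also correctly diagnose that the minimizer can degenerate (your $K_{s+t+2}$ example is right), and the sub-case $\omega(G)\ge s+1$ is correct --- in fact simpler than you make it, since every vertex outside an induced $K_{s+1}$ keeps degree at least $(s+t+1)-(s+1)=t$, so $G_2:=G\less V(K_{s+1})$ works immediately without any degeneracy argument.

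The genuine gap is the sub-case $\omega(G)\le s$, which you only sketch, and which is precisely the content of the theorem: it was a conjecture of Thomassen from 1983 that stood until Stiebitz's 1996 proof, so one should not expect it to fall to a short local argument. Concretely, your final step does not produce a contradiction. A vertex $u$ outside your vertex-minimal $H$ with at least $s+2$ neighbours in $V(H)$ contradicts neither $\omega(G)\le s$ (those neighbours need not be pairwise adjacent, so no large clique arises) nor the minimality of $H$ (adjoining $u$ to $H$ makes it \emph{larger}, and you exhibit no smaller set of minimum degree at least $s$, nor any well-founded auxiliary parameter that provably decreases --- ``the number of edges leaving $H$'' is not shown to drop). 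Stiebitz's actual proof replaces the choice of a minimal $H$ by a global extremal choice of the partition itself, minimizing a potential of roughly the shape $e(G[A])+e(G[B])-s|A|-t|B|$ with additive normalizations $\binom{s+1}{2}$ and $\binom{t+1}{2}$, combined with a delicate vertex-exchange analysis that prevents either part from collapsing. Some such device is indispensable here, and your proposal, as written, does not prove the theorem.
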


\begin{thm}[Mader~\cite{Mader68}]\label{t:mader}
 For every integer $p\le7$, every graph on   $n\ge p$ vertices  and  at least $ (p-2)n-{p-1\choose 2}+1$ edges has a $K_p$ minor.  
\end{thm}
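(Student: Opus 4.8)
The plan is to prove the contrapositive: for every integer $p\le 7$, every graph $G$ on $n\ge p$ vertices with no $K_p$ minor satisfies $e(G)\le (p-2)n-\binom{p-1}{2}=:f(n)$. I would fix $p$ and induct on $n$, first disposing of $p\le 5$ by elementary means and then bootstrapping $p=6$ from $p=5$ and $p=7$ from $p=6$. The base case $n=p$ is immediate, since a $p$-vertex graph has a $K_p$ minor iff it equals $K_p$: thus $e(G)\le\binom{p}{2}-1$, and one checks $\binom{p}{2}-1=\tfrac{(p-2)(p+1)}{2}=f(p)$. Observe also that $f(m)-f(m-1)=p-2$, the quantity that drives both reductions below.

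For the inductive step, let $n>p$ and suppose for contradiction that $G$ has no $K_p$ minor but $e(G)\ge f(n)+1$. If $\delta(G)\le p-2$, delete a vertex $v$ of minimum degree: $G-v$ has $n-1\ge p$ vertices, no $K_p$ minor, and $e(G-v)\ge e(G)-(p-2)\ge f(n-1)+1$, contradicting the induction hypothesis. So $\delta(G)\ge p-1$. If some edge $uv$ has $|N(u)\cap N(v)|\le p-3$, contract it: $G/uv$ has $n-1\ge p$ vertices, no $K_p$ minor, and $e(G/uv)=e(G)-1-|N(u)\cap N(v)|\ge e(G)-(p-2)\ge f(n-1)+1$, again a contradiction. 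Hence every edge of $G$ lies in at least $p-2$ triangles. Finally, if $d(v)=p-1$ for some $v$, then for each $u\in N(v)$ the $\ge p-2$ common neighbours of $u$ and $v$ exhaust $N(v)\setminus\{u\}$, so $G[N(v)]=K_{p-1}$ and $G[N[v]]=K_p$, a contradiction. Therefore $\delta(G)\ge p$.

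It remains to contradict the configuration: $p\le 7$, $\delta(G)\ge p$, every edge in at least $p-2$ triangles, no $K_p$ minor. This is the heart of the matter, and I would argue locally. For any vertex $v$, let $H:=G[N(v)]$; then $|H|=d(v)\ge p$ and $\delta(H)\ge p-2$ (each neighbour of $v$ has $\ge p-2$ of its common neighbours with $v$ inside $N(v)$), and $H$ has no $K_{p-1}$ minor, since adjoining the branch set $\{v\}$ to any $K_{p-1}$ model inside $N(v)$ yields a $K_p$ model in $G$. For $p\le 5$ this finishes the proof: $H$ is $K_4$-minor-free, hence $2$-degenerate, hence has a vertex of degree at most $2$, contradicting $\delta(H)\ge p-2\ge 3$ (and $p=3,4$ are already excluded at the first reduction, since $K_3$- and $K_4$-minor-free graphs have minimum degree at most $1$ and $2$). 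For $p\in\{6,7\}$ the class of $K_{p-1}$-minor-free graphs is no longer degenerate enough for this to work — e.g.\ the octahedron and icosahedron are $K_5$-minor-free with large minimum degree — so I would instead invoke the clique-sum (Wagner-type) structure of $K_{p-1}$-minor-free graphs for $p-1\in\{5,6\}$: having minimum degree $\ge p-2$ forces $H$ to be a highly connected, essentially planar graph, and I would then combine this structure with the vertex $v$ and one carefully chosen vertex outside $N[v]$ (which exists because $\delta(G)\ge p$ and $n>p$) to build a $K_p$ minor of $G$.

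The main obstacle is precisely this last, essentially finite, analysis for $p\in\{6,7\}$: classifying the $K_{p-1}$-minor-free graphs that can appear as the neighbourhood of a minimum-degree vertex when every edge lies in at least $p-2$ triangles, and then extracting the $K_p$ minor. This is where the hypothesis $p\le 7$ is genuinely needed; for $p\ge 8$ the bound fails. Everything preceding this step is routine bookkeeping with the linear function $f$. For sharpness, the graphs $K_{p-5}+T$ with $T$ a maximal planar graph have exactly $f(n)$ edges and, since $T$ has no $K_5$ minor, no $K_p$ minor.
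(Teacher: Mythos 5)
This statement is quoted from Mader's 1968 paper and is not proved in the present article, so your attempt can only be judged on its own merits. The preparatory reductions you give are correct and standard: the base case $n=p$, the arithmetic $f(m)-f(m-1)=p-2$, the deletion of a vertex of degree at most $p-2$, the contraction of an edge lying in at most $p-3$ triangles, the conclusion that $\delta(G)\ge p$ and that every edge lies in at least $p-2$ triangles, and the observation that $H:=G[N(v)]$ has minimum degree at least $p-2$ and no $K_{p-1}$ minor. The cases $p\le 5$ are indeed finished at that point by $2$-degeneracy of $K_4$-minor-free graphs, and your extremal examples $K_{p-5}+T$ are the right ones.

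However, for $p\in\{6,7\}$ — which you yourself identify as ``the heart of the matter'' — you do not give a proof, only a plan, so there is a genuine gap. Two concrete problems. First, for $p=7$ your plan invokes ``the clique-sum (Wagner-type) structure of $K_6$-minor-free graphs''; no such complete characterization exists (Wagner's theorem covers $K_5$ only; for $K_6$ one has at best partial results such as Jørgensen's conjecture for large highly connected graphs), so the tool you propose to lean on is not available. Second, even for $p=6$, where Wagner's theorem does apply, the step ``combine this structure with $v$ and one carefully chosen vertex outside $N[v]$ to build a $K_p$ minor'' is exactly the nontrivial part: $H$ may be, say, a planar triangulation of minimum degree $4$ or $5$ (octahedron, icosahedron), and one must actually exhibit how vertices outside $N[v]$, connected to $N(v)$ only through paths whose existence and disjointness you have not established, complete a $K_{p-1}$ minor on $N(v)\cup(V(G)\setminus N[v])$. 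Mader's own argument at this point is a delicate induction-with-connectivity analysis, not a corollary of a decomposition theorem, and reproducing it (or any substitute) is precisely what your write-up omits. As it stands the proposal is an accurate outline of the easy half of the proof together with an unexecuted, and for $p=7$ unexecutable-as-stated, plan for the hard half.
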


\section{Main results}~\label{s:Main}
  Rolek and the present author~\cite[Theorem 5.2]{RolekSong2017} proved that if Mader's bound in \cref{t:mader} can be generalized to all values of $p$ (as in~\cite[Conjecture 5.1]{RolekSong2017}), then every graph with no $K_p$ minor is $(2p-6)$-colorable for all $p\ge7$. It is hard to prove~\cite[Conjecture 5.1]{RolekSong2017}.  We begin this section with an easy  result on coloring even-hole-free graphs with no $K_k$ minor, where $k\ge7$. It seems non-trivial to improve the bound in \cref{t:minor} to $2k-6$. 

\begin{thm}\label{t:minor}
For all $k\ge 7$, every even-hole-free graph with no $K_k$ minor is $(2k-5)$-colorable. 
\end{thm}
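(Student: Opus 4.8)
The plan is to combine the Chudnovsky--Seymour bound $\chi(G)\le 2\omega(G)-1$ from \cref{t:evenholefree} with a known bound on the clique number of graphs with no $K_k$ minor, and then patch up the small cases. Suppose $G$ is even-hole-free with no $K_k$ minor, $k\ge 7$, and suppose for contradiction that $G$ is not $(2k-5)$-colorable. We may assume $G$ is vertex-critical, so every proper subgraph of $G$ is $(2k-6)$-colorable and $\delta(G)\ge 2k-6$. By \cref{t:evenholefree} we have $\chi(G)\le 2\omega(G)-1$, so $2k-5\le \chi(G)\le 2\omega(G)-1$, giving $\omega(G)\ge k-2$. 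Since $G$ has no $K_k$ minor it certainly has no $K_k$ subgraph, so $\omega(G)\le k-1$, and hence $\omega(G)\in\{k-2,k-1\}$.

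The key step is to rule out a large clique coexisting with the minor-freeness. If $\omega(G)=k-1$, let $K$ be a clique on $k-1$ vertices. Every vertex outside $K$ has a non-neighbor in $K$ (else we would get $K_k$), but we need more: the point is that a critical graph with a $K_{k-1}$ and with $\delta(G)\ge 2k-6\ge k-1$ has enough external structure to build a $K_k$ minor. Concretely, if some vertex $v\notin K$ has at least $k-2$ neighbors in $K$, then $v$ together with $K$ spans $K_k$ minus at most one edge; using a second vertex $w\notin K$ adjacent to $v$ and to the two endpoints of that missing edge (which exists because $\delta(G)$ is large and $G[K\cup\{v\}]$ cannot be a clique), we contract $vw$ and obtain a $K_k$ minor, a contradiction. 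So every vertex outside $K$ has at most $k-3$ neighbors in $K$; combined with $\delta(G)\ge 2k-6$, each such vertex has at least $(2k-6)-(k-3)=k-3$ neighbors outside $K$. One then shows $V(G)\setminus K$ is non-empty and, iterating this counting together with connectivity, locates two vertices $u_1,u_2\notin K$ with a common neighbor outside $K$ and with $|N(u_i)\cap K|$ summing to at least $k-1$ in a way that lets the sets $K$, and the contracted branch sets around $u_1,u_2$, form a $K_k$ minor. A cleaner route is to invoke \cref{t:mader} when $k\le$ the range it covers, but since $k\ge 7$ we instead rely on the Kostochka--Thomason-type bound: a graph with no $K_k$ minor has average degree $O(k\sqrt{\log k})$, which for the relevant regime forces $|G|$ to be large enough that the clique $K$ cannot dominate, and criticality then yields the minor directly. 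The case $\omega(G)=k-2$ is similar but easier: here $\chi(G)=2k-5=2\omega(G)-1$ exactly, so $G$ is an extremal example for \cref{t:evenholefree}, and the rigidity of such extremal configurations (each colour class structure forced) again produces a $K_k$ minor.

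I expect the main obstacle to be precisely the step of converting ``large clique plus large minimum degree plus even-hole-freeness'' into an explicit $K_k$ minor: one must carefully choose branch sets that avoid creating an even hole obstruction while still using enough vertices, and the even-hole-free hypothesis has to be leveraged (via the bisimplicial vertex, whose neighbourhood is a union of two cliques) to control how vertices outside $K$ attach to $K$. A bisimplicial vertex $v$ has $N(v)=A\cup B$ with $A,B$ cliques, so $d(v)\le |A|+|B|-[\text{overlap}]\le 2\omega(G)-2\le 2(k-1)-2=2k-4$; combined with $\delta(G)\ge 2k-6$ for a critical graph, the minimum-degree vertex is bisimplicial with degree in a narrow window $\{2k-6,2k-5,2k-4\}$, and its two defining cliques are both near-maximum. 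Deleting this vertex and its tightly controlled neighbourhood should reduce to a smaller even-hole-free graph, setting up an induction on $|G|$; the delicate point is ensuring the inductive hypothesis applies with the same $k$, i.e.\ that $\omega$ does not drop. If it does drop to $k-3$ or below, then $\chi\le 2(k-3)-1=2k-7<2k-5$ for the remainder after removing a bounded number of vertices, which finishes the count. So the proof is essentially: reduce to critical, pin $\omega\in\{k-2,k-1\}$ via \cref{t:evenholefree}, then exploit the bisimplicial vertex together with a minor-construction argument in each of these two cases.
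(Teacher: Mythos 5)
Your overall strategy (reduce to a critical counterexample, use \cref{t:evenholefree} to force $\omega(G)$ to be large, then derive a $K_k$ minor) is the right one, but the proposal has two problems, one of which is a genuine gap. First, an off-by-one slip: ``not $(2k-5)$-colorable'' means $\chi(G)\ge 2k-4$, not $\chi(G)\ge 2k-5$, and a vertex-critical such graph has $\delta(G)\ge 2k-5$, not $2k-6$. With the correct inequality, $2k-4\le\chi(G)\le 2\omega(G)-1$ forces $\omega(G)\ge k-1$ outright, so your case $\omega(G)=k-2$ never arises; equivalently, once you know $\omega(G)\le k-2$ you get $\chi(G)\le 2(k-2)-1=2k-5$, an immediate contradiction. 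Your treatment of that case via ``rigidity of extremal configurations'' is not an argument and is fortunately unnecessary.

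The real gap is the case $\omega(G)\ge k-1$, where you must actually produce a $K_k$ minor. Your proposal offers several sketches (a vertex with $k-2$ neighbours in $K$, an iterated counting argument, an appeal to Kostochka--Thomason average degree bounds, an induction on the bisimplicial vertex) but completes none of them, and you explicitly flag this step as the obstacle. None of these routes is needed: the standard fact you are missing is that a vertex-critical graph has \emph{no clique cutset} (otherwise one could combine colourings of the pieces across the cutset). Hence if $K$ is a $(k-1)$-clique in $G$, then $G\less K$ is non-empty and connected, and since $\delta(G)\ge 2k-5>k-2$ every vertex of $K$ has a neighbour outside $K$; contracting all of $G\less K$ to a single vertex therefore yields a vertex complete to $K$, i.e.\ a $K_k$ minor. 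This one observation replaces your entire second and third paragraphs, and in particular the even-hole-free hypothesis is needed only for the Chudnovsky--Seymour bound, not for the minor construction.
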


\begin{proof} Suppose the assertion is false. Let $G$ be an even-hole free graph with no $K_k$ minor  and  $\chi(G)\ge 2k-4$. We choose $G$ with $|G|$ minimum. Then $G$ is vertex-critical and $\chi(G)=2k-4$. Thus $\delta(G)\ge 2k-5$;  in addition, $G$ is connected and has no clique-cut. Suppose  $\omega(G)\ge k-1$. Let $K$ be a $(k-1)$-clique   in $G$. Then $G\less K$ is connected because $G$ has no clique-cut; by contracting $G\less K$ into a single vertex we obtain a $K_k$ minor, a contradiction.  Thus   $\omega(G)\le k-2$. 
Since $G$ is even-hole-free, by \cref{t:evenholefree},  $\chi(G)\le 2\omega(G)-1\le 2(k-2)-1=2k-5$, a contradiction.  \end{proof}

 We next prove  a   lemma which plays a key role  in the proof of \cref{t:ehfree} and \cref{t:case46}.

  \begin{lem}\label{l:main}
Let $G$ be a   graph  and $x\in V(G)$ with  $p:=\chi(G[N(x)])\ge2$. Let    $V_1, \ldots, V_p$ be the color classes of a proper $p$-coloring of $G[N(x)]$ with $|V_1|\ge\cdots\ge |V_p|\ge1$. If $   |V_r\cup\cdots\cup V_p|\le \chi(G)-r-1$ for some $r\in[p]$ with $2\le r\le p $, then $p\le \chi(G)-2$ and $G$ is $(r, \chi(G)+1-r)$-splittable. 
\end{lem}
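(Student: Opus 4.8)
Write $k:=\chi(G)$. The plan is to exhibit the splitting explicitly. Put $A:=V_1\cup\cdots\cup V_{r-1}$ and $W:=V_r\cup\cdots\cup V_p$, so that $N(x)=A\cup W$ is a disjoint union with $A\supseteq V_1\ne\emptyset$, and set $S:=\{x\}\cup A$ and $T:=V(G)\less S=(V(G)\less N[x])\cup W$. I will prove $\chi(G[S])\ge r$ and $\chi(G[T])\ge k+1-r$; since $S\cup T=V(G)$, this is precisely the statement that $G$ is $(r,k+1-r)$-splittable, and the bound $p\le k-2$ will come out on the way.

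First note that each of $V_r,\dots,V_p$ is nonempty, so $p-r+1\le |W|\le k-r-1$ and hence $p\le k-2$; in particular $k-r\ge k-p\ge 2$. For $\chi(G[S])\ge r$, I will use the elementary fact that deleting an independent set from a graph lowers its chromatic number by at most $1$. Deleting the independent sets $V_r,\dots,V_p$ (there are $p-r+1$ of them) from $G[N(x)]$ leaves $G[A]$, so $\chi(G[A])\ge \chi(G[N(x)])-(p-r+1)=p-(p-r+1)=r-1$. Since $x$ is complete to the nonempty set $A$, every proper coloring of $G[S]$ must assign $x$ a color not used on $A$, whence $\chi(G[S])\ge\chi(G[A])+1\ge r$.

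The main point is to show $\chi(G[T])\ge k+1-r$. Suppose not; fix a proper coloring of $G[T]$ with color set $\{1,\dots,k-r\}$, and extend it by coloring $V_i$ with color $k-r+i$ for $i=1,\dots,r-1$. As each $V_i$ is independent and the color sets $\{1,\dots,k-r\}$ and $\{k-r+1,\dots,k-1\}$ are disjoint, this is a proper $(k-1)$-coloring of $G\less x$. It remains to color $x$. The neighbors of $x$ inside $T$ are exactly the vertices of $W$, and $|W|\le k-r-1$, so they use at most $k-r-1$ of the $k-r$ colors in $\{1,\dots,k-r\}$; the other neighbors of $x$ lie in $A$ and use only colors from $\{k-r+1,\dots,k-1\}$. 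Hence some color in $\{1,\dots,k-r\}$ is still available at $x$, and we obtain a proper $(k-1)$-coloring of $G$, contradicting $\chi(G)=k$.

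I expect this last step to be the only genuine obstacle: the hypothesis $|V_r\cup\cdots\cup V_p|\le\chi(G)-r-1$ is used in exactly the tight form needed to leave one free color at $x$ once $T$ and $A$ have been colored, so the whole argument hinges on choosing $S$ and $T$ — in particular on pushing $V_r,\dots,V_p$ into $T$ along with the vertices outside $N[x]$ — with this count in mind. The remaining ingredients (the bound on $p$, the independent-set deletion estimate, and the observation that $V_r\ne\emptyset$ rules out the degenerate case $T=\emptyset$) are routine.
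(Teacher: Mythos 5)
Your proof is correct and takes essentially the same approach as the paper: the same partition $S=\{x\}\cup V_1\cup\cdots\cup V_{r-1}$, the same independent-set-deletion bound giving $\chi(G[S])\ge r$, and the same counting of the at most $\chi(G)-r-1$ neighbours of $x$ outside $S$ to free up a colour for $x$. The only difference is presentational: you argue contrapositively (a small colouring of $T$ would yield a $(\chi(G)-1)$-colouring of $G$), whereas the paper argues directly that deleting $x$ from $G\less(V_1\cup\cdots\cup V_{r-1})$ does not lower the chromatic number.
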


\begin{proof} Let $G,  p, r, V_1, \ldots, V_p$ be as given in the statement. Note that  $p-r+1\le |V_r\cup\cdots\cup V_p|\le \chi(G)-r-1$ and so $p\le \chi(G)-2$ and $V(G)\less N[x]\ne\emptyset$.  Let $W:=V_1\cup\cdots\cup V_{r-1}$. Then  $\chi(G[\{x\}\cup W])=r$  
and $\chi(G\less W)\ge \chi(G)-(r-1)=\chi(G)+1-r$. It suffices to show that $ \chi(G\less(\{x\}\cup W))\ge \chi(G\less W)$.  Let $q:=\chi(G\less(\{x\}\cup W))\ge \chi(G\less W)-1\ge \chi(G)-r\ge2$ and let $U_1, \ldots, U_q$ be the color classes of a proper $q$-coloring of $G\less(\{x\}\cup W)$. Since $x$ is adjacent to $|V_r\cup\cdots\cup V_p|\le \chi(G)-r-1\le q-1$ vertices in $G\less W$, we see that $x$ is anti-complete to   $U_i $  for some $i\in[q]$. We may assume that $i=1$. Then  $U_1\cup\{x\}, U_2, \ldots, U_q$  form the color classes of a proper $q$-coloring of $G\less W$. Therefore, $ \chi(G\less(\{x\}\cup W))=q\ge \chi(G\less W)\ge \chi(G)-r+1$, as desired.  \end{proof}

We are now ready to prove that the Erd\H{o}s-Lov\'asz Tihany conjecture holds for even-hole-free graphs $G$ with $\omega(G)<\chi(G)=s+t-1$ if  $t\ge s>\chi(G)/3$. It would be nice if one can prove the same holds without the additional condition.

 \begin{thm}\label{t:ehfree}
Let $G$ be an  even-hole-free graph  with $\omega(G)<\chi(G)=s+t-1$, where $t\ge s\ge2$. If $s>\chi(G)/3$, then $G$  is $(s,t)$-splittable. 
\end{thm}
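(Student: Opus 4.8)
The plan is to argue by contradiction through a minimal counterexample and to reduce everything to \cref{l:main} applied with $r=s$. Fix $s,t$ with $t\ge s\ge2$; then the hypothesis $s>\chi(G)/3$ is just the numerical statement $t\le 2s$, since $\chi(G)=s+t-1$. Suppose some even-hole-free $G$ with $\omega(G)<\chi(G)=s+t-1$ is not $(s,t)$-splittable, and choose such a $G$ with $|G|$ minimum. Standard arguments show $G$ is vertex-critical (deleting a vertex keeps it even-hole-free with clique number $<s+t-1$ and still non-$(s,t)$-splittable, but smaller), hence $G$ is connected and $\delta(G)\ge\chi(G)-1=s+t-2$; being a connected, $(s+t-1)$-chromatic, non-$(s,t)$-splittable graph, $G$ is an $(s,t)$-graph. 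Now \cref{t:comnbr} forbids $\omega(G)\ge t$ (which would force $\omega(G)\ge s+t-1$), so $\omega(G)\le t-1$; feeding this into $\chi(G)\le 2\omega(G)-1$ from \cref{t:evenholefree} gives $s+t-1\le 2(t-1)-1$, i.e.\ $t\ge s+2$. So I am working in the range $s+2\le t\le 2s$.

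Next I would invoke the bisimplicial vertex $x$ provided by \cref{t:evenholefree}: $N(x)=A\cup B$ with $A,B$ cliques. Since $A\cup\{x\}$ and $B\cup\{x\}$ are cliques, $|A|,|B|\le\omega(G)-1\le t-2$, and therefore $d(x)=|A\cup B|\le 2t-4$. The graph $G[N(x)]$ is a union of two cliques, hence perfect (its complement is bipartite), so $p:=\chi(G[N(x)])=\omega(G[N(x)])\le\omega(G)-1\le t-2$; and $p\ge\max\{|A|,|B|\}\ge d(x)/2\ge (s+t-2)/2\ge s$, the last inequality because $t\ge s+2$. Thus $2\le s\le p\le t-2$, so \cref{l:main} is applicable with $r=s$. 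The role of bisimpliciality is that an independent set of $G[N(x)]$ meets each of the cliques $A,B$ at most once and so has at most two vertices; hence every colour class of any proper $p$-colouring of $G[N(x)]$ has size at most $2$.

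Now fix such a colouring with classes $V_1,\dots,V_p$, $|V_1|\ge\cdots\ge|V_p|\ge1$, and estimate the quantity $|V_s\cup\cdots\cup V_p|$ that \cref{l:main} controls. If $m$ denotes the number of size-$2$ classes, then $d(x)=p+m$ and, since the size-$2$ classes come first, the union of the $s-1$ largest classes has size $\min\{2(s-1),\,m+s-1\}$, whence $|V_s\cup\cdots\cup V_p|=\max\{d(x)-2s+2,\,p-s+1\}$. Here $p-s+1\le p-1\le t-3<t-2$ (using $p\le t-2$ and $s\ge2$), while $d(x)-2s+2\le (2t-4)-2s+2=2(t-s-1)\le t-2$, the last step being exactly $t\le2s$. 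Hence $|V_s\cup\cdots\cup V_p|\le t-2=\chi(G)-s-1$, and \cref{l:main} with $r=s$ gives that $G$ is $(s,\chi(G)+1-s)=(s,t)$-splittable, contradicting the choice of $G$.

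The one genuinely nontrivial ingredient is the use of bisimpliciality in the shape ``the colour classes of $G[N(x)]$ have size at most two'': this is what forces the $p-s+1$ smallest classes to contain all but $d(x)-2s+2$ of $N(x)$, and after bounding $d(x)\le 2t-4$ via $\omega(G)\le t-1$ the inequality demanded by \cref{l:main} collapses precisely to the hypothesis $t\le 2s$. The remaining steps — that a minimal counterexample is a vertex-critical $(s,t)$-graph, the perfection of $G[N(x)]$, and the bookkeeping for $|V_s\cup\cdots\cup V_p|$ — are routine.
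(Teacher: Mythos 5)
Your proof is correct and follows essentially the same route as the paper: a minimal counterexample is a vertex-critical $(s,t)$-graph with $\omega(G)\le t-1$ by \cref{t:comnbr}, and \cref{l:main} applied at a bisimplicial vertex with $r=s$ yields the contradiction. Your observation that $G[N(x)]$ is perfect (so $p=\omega(G[N(x)])\le t-2$ automatically) neatly removes the paper's separate case $p\ge t-1$ and its claim that $\Delta(G)\le|G|-2$, and your closed form $|V_s\cup\cdots\cup V_p|=\max\{d(x)-2s+2,\,p-s+1\}$ replaces the paper's final proof-by-contradiction with a direct check that both terms are at most $t-2=\chi(G)-s-1$ precisely when $t\le 2s$.
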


\begin{proof} Suppose the assertion is false. Let $G$ be a counterexample with $|G|$ minimum. Then   $G$ is vertex-critical; in addition, $G$ is an $(s,t)$-graph.   Thus $\delta(G)\ge \chi(G)-1=s+t-2$.    
 By \cref{t:comnbr}, $\omega(G)\le t-1$.   Since  $G$ is even-hole-free,  by \cref{t:evenholefree}, $G$ has a bisimplicial vertex  $v$ such that  
 $N(v)$ is the union of  two   cliques. Thus  $\alpha(G[N(v)])\le 2$, $\omega(G[N(v)])\le  t-2$ and 
 $$s+t-2=\chi(G)-1\le \delta(G)\le d(v)\le 2\omega(G[N(v)])\le 2t-4.$$
 It follows that  $t\ge s+2 \ge4$  and $\chi(G)=s+t-1\ge 2s+1$. 
 We next claim that $\Delta(G)\le |G|-2$. Suppose there exists $x\in V(G)$ such that $d(x)=|G|-1$. Then \[\chi(G\less x)=\chi(G)-1=s+(t-1)-1>\omega(G)-1=\omega(G\less x) \text{ and } t-1>s>\chi(G\less x)/3.\]  By the minimality of $G$,  $G\less x$ is $(s, t-1)$-splittable and thus $G$ is  $(s,t)$-splittable, a contradiction. Thus   $\Delta(G)\le |G|-2$, as claimed.  It follows that $V(G)\less N[v]\ne\emptyset$ and so $\chi(G[N[v]])\le \chi(G)-1$. Let $p:=\chi(N(v))$. Then  $p=\chi(G[N[v]])-1\le \chi(G)-2$.  Note that  
  $$p\ge \omega(G[N(v)])\ge d(v)/2\ge (\chi(G)-1)/2 \ge ((2s+1)-1)/2=  s\ge2.$$
Let $V_1, \ldots, V_p$ be the color classes of a proper $p$-coloring of $G[N(v)]$ with $2\ge |V_1|\ge \cdots\ge |V_p|\ge1$. Suppose $p\ge t-1$. Then $|V_{t-2}|=1$ because $d(v)\le 2t-4$. Therefore,   \[|V_t\cup\cdots\cup V_p|=p-t+1\le (\chi(G)-2)-t+1= \chi(G)-t-1.\]  By \cref{l:main} applied to $G$ and $v$ with $r=t$, we see that $G$ is $(s, t)$-splittable, a contradiction. Thus $s\le p\le t-2$.  Next, if $|V_s\cup\cdots\cup V_p|\le \chi(G)-s-1$, then  $G$ is $(s, t)$-splittable by applying \cref{l:main}   to $G$ and $v$ with $r=s$, a contradiction. Hence, 
 $|V_s\cup\cdots\cup V_p|\ge \chi(G)-s=t-1\ge 3$.  Note that $p-s+1\le (t-2)-2+1=t-3$,  and so   $|V_s|=2$ and 
  \[d(v)=(|V_1|+\cdots+|V_{s-1}|)+|V_s\cup\cdots\cup V_p|\ge  2(s-1)+t-1=2s+t-3.\]
 It follows that  
  $t-2\ge \omega(G[N(v)])\ge d(v)/2\ge (2s+t-3)/2$, which implies that  $t\ge 2s+1$. Thus   $\chi(G)=s+t-1\ge 3s$, contrary to the assumption that $3s>\chi(G)$.  
\end{proof}

Finally, we prove that   \cref{c:eltcminor} is true when  $(s,t)=(4,6)$. 

\begin{thm}\label{t:case46}
 Every   $9$-chromatic graph $G$ with $\omega(G)\le 8$ has a   $  K_4\cup K_6$ minor.  
\end{thm}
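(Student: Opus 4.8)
The plan is to argue by contradiction: choose a counterexample $G$ with $|G|$ minimum. Then $G$ is vertex-critical (else delete a vertex), so $\delta(G)\ge\chi(G)-1=8$, $G$ is connected, and $G$ has no clique-cut. Throughout I will use that \cref{HC} is known for $k\le 6$ (so $\chi(H)\ge k$ forces $H\succcurlyeq K_k$ whenever $k\le 6$), that Kawarabayashi--Pedersen--Toft verified the case $(s,t)=(4,5)$ of \cref{c:eltcminor}, and \cref{l:main}, \cref{t:mindegree}, \cref{t:mader}.

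\emph{Two reductions.} First, $\Delta(G)\le|G|-2$: if some $x$ has $d(x)=|G|-1$ then $x$ is complete to $G\less x$, so every clique of $G\less x$ extends by $x$ and hence $\omega(G\less x)\le 7<8=\chi(G\less x)$; by the $(4,5)$ case of \cref{c:eltcminor} the graph $G\less x$ has vertex-disjoint $K_4$- and $K_5$-models, and adjoining $\{x\}$ as a new branch set promotes the $K_5$-model to a $K_6$-model disjoint from the $K_4$-model, a contradiction. Second, $\delta(G)\le 11$: if $\delta(G)\ge 12$ then \cref{t:mindegree} with $(s,t)=(3,8)$ yields vertex-disjoint $G_1,G_2$ with $\delta(G_1)\ge 3$ and $\delta(G_2)\ge 8$; a graph of minimum degree at least $3$ has a $K_4$-subdivision, so $G_1\succcurlyeq K_4$, while $e(G_2)\ge 4|G_2|\ge 4|G_2|-9$ and $|G_2|\ge 9$, so \cref{t:mader} with $p=6$ gives $G_2\succcurlyeq K_6$ -- again a contradiction.

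\emph{Main step: shrinking the neighbourhood.} Fix $v$ with $d(v)=\delta(G)\in\{8,\dots,11\}$ and put $U:=V(G)\less N[v]$, non-empty by the first reduction. Since $G$ is vertex-critical, $\chi(G[N[v]])\le 8$, and since $v$ is complete to $N(v)$ we have $\chi(G[N[v]])=p+1$ with $p:=\chi(G[N(v)])$, so $p\le 7$; also $\chi(G[U])\ge\chi(G\less v)-p=8-p$. Let $V_1,\dots,V_p$ be the classes of a proper $p$-colouring of $G[N(v)]$ with $|V_1|\ge\cdots\ge|V_p|\ge 1$. Applying \cref{l:main} to $G$ and $v$ with $r=4$ and with $r=6$: if a hypothesis held then $G$ would be $(4,6)$- respectively $(6,4)$-splittable, and \cref{HC} for $k=4$ and $k=6$ would then furnish disjoint $K_4$- and $K_6$-models -- impossible. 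Hence $|V_4\cup\cdots\cup V_p|\ge 5$ if $p\ge 4$ and $|V_6\cup\cdots\cup V_p|\ge 3$ if $p\ge 6$. Since $\sum_i|V_i|=d(v)\le 11$, this forces $p\le 3$: if $p\ge 6$ then $|V_6|\ge 2$ and $d(v)\ge 12$; if $p=5$ then $|V_4|\ge 3$ and $d(v)\ge 13$; if $p=4$ then $|V_4|\ge 5$ and $d(v)\ge 20$, all absurd.

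\emph{Endgame, and the main obstacle.} It remains to treat $p\in\{1,2,3\}$. When $p=3$: for each $\{i,j,k\}=\{1,2,3\}$, the partition $V(G)=\big(\{v\}\cup V_j\cup V_k\big)\sqcup\big(V_i\cup U\big)$ together with $\chi(G[\{v\}\cup V_j\cup V_k])\le 1+\chi(G[V_j\cup V_k])\le 3$ gives $\chi(G[V_i\cup U])\ge\chi(G)-3=6$, so $G[V_i\cup U]\succcurlyeq K_6$ for every $i$; and since $v$ is complete to $N(v)$, the graph $G[\{v\}\cup V_j\cup V_k]$ (a join) has a $K_4$ minor precisely when $G[V_j\cup V_k]$ contains a cycle, so we finish unless \emph{every} pair of colour classes induces a forest (whence $e(G[N(v)])<2\,d(v)$). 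When $p\le 2$ one has $\chi(G[U])\ge 8-p\ge 6$ directly, so $G[U]\succcurlyeq K_6$; using $\chi(G[U])\le 8$ together with the $(4,5)$ case of \cref{c:eltcminor} when $\omega(G[U])\le 7$, one produces disjoint $K_4$- and $K_5$-models inside $U$ and tries to promote the $K_5$-model to a $K_6$-model through a connected piece of $N[v]$. The hard part -- what I expect to be the real obstacle -- is exactly the leftover, ``sparse neighbourhood'' configurations (all pairs of colour classes forests, and the analogous $p\le 2$ cases) where none of these promotions is immediate; ruling them out should exploit that $G$ is $2$-connected with no clique-cut, for instance to build the missing cycle from a vertex of $U$ adjacent to two colour classes or to relocate the $K_6$-model, with the subcase $\omega(G[U])=8$ (i.e.\ $U$ contains a $K_8$) handled separately.
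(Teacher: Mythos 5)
Your proposal does not close: the entire case $p:=\chi(G[N(v)])\le 3$ (every pair of colour classes inducing a forest when $p=3$, and the $p\le 2$ configurations) is left as an acknowledged obstacle, with only speculative remarks about exploiting $2$-connectedness or relocating minor models. So this is a genuine gap, not a stylistic difference. What you are missing is that a minimal counterexample cannot have a sparse neighbourhood at all: the paper proves (its Claim~1) that for every vertex $x$, contracting $\{x\}\cup A$ for a maximum independent set $A\subseteq N(x)$ into a single vertex yields a smaller graph $G^*$ with no $K_4\cup K_6$ minor and $\omega(G^*)<8$, hence $\chi(G^*)\le 8$ by minimality; if $\alpha(G[N(x)])\ge d(x)-6$ one can lift an $8$-colouring of $G^*$ back to $G$ by giving $A$ the colour of the contracted vertex and $x$ a fresh colour, a contradiction. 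Thus $\alpha(G[N(x)])\le d(x)-7$, which forces $\delta(G)\ge 9$ and, once $d(v)\le 10$ is known, forces $p\ge 4$ --- exactly eliminating the range $p\le 3$ where your argument stalls.

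The other ingredients you lack are the ones that pin down $d(v)\le 10$ and bound $p$ from above. The paper first gets $\omega(G)\le 5$ from Stiebitz's \cref{t:comnbr} (since $G$ is a $(4,6)$-graph), then shows each $G[N(x)]$ is even-hole-free (an even hole $C$ in $N(x)$ gives $G[V(C)\cup\{x\}]\succcurlyeq K_4$ with $\chi=3$, leaving chromatic number $\ge 6$ outside), so \cref{t:evenholefree} gives $\chi(G[N(x)])\le 2\omega(G[N(x)])-1$; this yields a triangle in $N(v)$, and a Mader edge count on $G\less\{v,v_1,v_2,v_3\}$ kills $d(v)\in\{11,12\}$. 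With $9\le d(v)\le 10$, $4\le p\le d(v)-3$, and the class sizes constrained, \cref{l:main} with $r=4$ or $r=6$ finishes. Your opening reductions (the dominating-vertex argument via the $(4,5)$ case of \cref{c:eltcminor}, and $\delta\le 11$ via \cref{t:mindegree}) are correct and your use of \cref{l:main} to force $p\le 3$ is sound given $d(v)\le 11$, but without the contraction argument of Claim~1 these two halves never meet, and the proof remains open precisely in the cases you identify as hard.
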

\begin{proof}
Suppose for a contradiction that  $G$ is a counterexample  to the statement with minimum number of vertices. Then $G$ is vertex-critical, and so $\delta(G)\ge 8$ and $G$ is connected. Suppose $G$   contains two vertex-disjoint subgraphs $G_1$ and $G_2$ such that $\chi(G_1)\ge 4$ and $ \chi(G_2)\ge 6$. Since Hadwiger's conjecture holds for  $k$-chromatic graphs with   $k\le6$, we see that     $  G_1\succcurlyeq K_4$ and $  G_2\succcurlyeq K_6$, a contradiction. Thus  $G$ is a $(4,6)$-graph, and so $\omega(G)\le5$ by  \cref{t:comnbr}.    Note that $G$ is not necessarily  contraction-critical, as a proper minor of $G$ may have clique number $9$. We  claim that  \medskip

\noindent {\bf Claim 1.}  \,\,  $2\le\alpha(G[N(x)])\le d(x)-7$ for   each  $x\in V(G)$.  

\begin{proof}  Let $x\in V(G)$. Since $\omega(G)\le5$ and $\delta(G)\ge8$, we see that $\alpha(G[N(x)])\ge2$.  Suppose   $\alpha(G[N(x)])\ge  d(x)-6$. Let $A$ be a maximum independent set of $G[N(x)]$. Let $G^*$ be obtained from $G$ by contracting $G[A\cup\{x\}]$ into a single vertex, say $w$. Note that $\omega(G^*)<8$ and   $G^*$ has no $K_4\cup K_6$ minor.  By the minimality of $G$,   $\chi(G^*)\le 8$.  Let $c: V(G^*)\to [8]$ be a proper $8$-coloring of $G^*$. Since $|N(x)\less A|=d(x)-|A|\le 6$, we may assume that $c(N(x)\less A)\subseteq [6]$ and $c(w)=7$. But then we obtain a proper $8$-coloring of $G$ from $c$ by coloring all the vertices in $A$ with color $7$ and the vertex $x$ with  color $8$, a contradiction. Thus  $2\le \alpha(G[N(x)])\le d(x)-7$, as claimed. \end{proof}

 By Claim 1,  $\delta(G)\ge9$.     Suppose $\delta(G)\ge 13$. By \cref{t:mindegree}, $G$   contains two vertex-disjoint subgraphs $G_1$ and $G_2$ such that $\delta(G_1)\ge 4 $ and $ \delta(G_2)\ge 8$. By 
 \cref{t:mader}, we see that $  G_1\succcurlyeq K_4$ and $  G_2\succcurlyeq K_6$, a contradiction. Thus $9\le \delta(G)\le 12$.   
 We next claim that \medskip

\noindent {\bf Claim 2.}\,\, $G[N(x)]$  is even-hole-free  and $\chi(G[N(x)])\le 2\omega(G[N(x)])-1$ for each $x\in V(G)$.  

\begin{proof} Let $x\in V(G)$. Suppose   $G[N(x)]$ contains an even hole $C$. Then  $\chi(G[V(C)\cup\{x\}]) =3$ and so $\chi(G\less (V(C)\cup\{x\}))\ge \chi(G) -3=6$. It is easy to see that  $G[V(C)\cup\{x\}]\succcurlyeq K_4$. Since Hadwiger's conjecture holds for $6$-chromatic graphs, we see that   $  G\less (V(C)\cup\{x\})$ has a  $K_6$ minor, and so $G$ has a  $K_4\cup K_6$ minor, a contradiction. Thus  $G[N(x)]$  is even-hole-free. By \cref{t:evenholefree}, $\chi(G[N(x)])\le 2\omega(G[N(x)])-1$. \end{proof}

Let $v\in V(G)$ with $d(v)= \delta(G)$, and let $p:=\chi(G[N(v)])$.  Since  $9\le d(v)\le 12$, we see that    $p\ge3$ by Claim 1. Suppose $G[N(v)]$ is $K_3$-free. By Claim 2, $p\le 2\omega(G[N(v)])-1=3$. Thus    $\chi(G[N[v]])=4$  and    $\chi(G\less N[v])= \chi(G\less N(v))\ge 9-3=6$, contrary to the fact that $G$ is a $(4,6)$-graph.  Thus $\omega(G[N(v)])\ge 3$.    Let $v_1, v_2, v_3\in N(v)$ be pairwise adjacent in $G$ and let $H:=G\less \{v,v_1, v_2, v_3 \}$. Then $G[\{v,v_1, v_2, v_3 \}]=K_4$ and \begin{align*}
2e(H)&\ge (d(v)-3)(|G\less N[v]|)+ (d(v)-4)\cdot |N(v)\less \{v_1, v_2, v_3\}|\\
&=(d(v)-3)(|G|-d(v)-1)+(d(v)-4)(d(v)-3)\\
&=(d(v)-3)(|H|-1).
\end{align*}
 Suppose $d(v)\in\{11,12\}$.  Then  $2e(H)\ge 8(|H|-1)$. By \cref{t:mader},    $H\succcurlyeq K_6$, and so $G$ has a  $K_4\cup K_6$ minor, a contradiction. This proves that  $ 9\le   d(v)\le 10$.    Then $p\ge4$ by Claim 1.  Since $\omega(G[N(v)])\le 4$, we see that $G[N(v)]$ has an anti-matching of size at least three. It follows that  $4\le p\le d(v)-3$. Let $V_1, \ldots, V_p$ be the color classes of a proper $p$-coloring of $G[N(v)]$ with $|V_1|\ge \cdots\ge |V_p|\ge1$. If  $p\in\{4,5\}$, then $|V_4|\le 2$ because $d(v)\le10$. Thus $|V_4\cup\cdots\cup V_p|\le 4=\chi(G)-4-1$. By \cref{l:main} applied to  $G$ and $v$ with $r=4$, we see that $G$ is $(4,6)$-splittable,  contrary to the fact that $G$ is a $(4,6)$-graph.  It remains to consider the case  $6\le p\le d(v)-3$. Since $d(v)\le10$, we see that  $|V_5|=1$.  Thus $|V_6\cup\cdots\cup V_p|=p-5\le (d(v)-3)- 5\le2=\chi(G)-6-1$. By \cref{l:main}    applied to  $G$ and $v$ with $r=6$, we see that $G$ is $(4,6)$-splittable, a contradiction. \medskip

This completes the proof of \cref{t:case46}.  
\end{proof}

\bibliographystyle{alpha}
\bibliography{Zixia}

\end{document}